\documentclass[a4paper, 11pt]{article}

\usepackage{graphicx}
\usepackage{amssymb,amsmath,amscd,amsthm,latexsym}
\usepackage{tabularx}
\usepackage{array}
\usepackage{delarray}
\DeclareGraphicsExtensions{.jpg,.pdf,.mps,.eps,.png}
\usepackage[utf8x]{inputenc}
\usepackage{ucs}
\usepackage{amsfonts}
\usepackage{multicol}
\usepackage{dsfont}
\usepackage{mathrsfs}
\usepackage{epsfig}
\usepackage[T1]{fontenc}
\usepackage{slashbox}
\usepackage[center]{caption}
\usepackage{color}

\input xy
\xyoption{all}
\usepackage{xy}

\newtheorem{thm}{Theorem}
\newtheorem{lem}{Lemma}
\newtheorem{prop}{Proposition}
\newtheorem{cor}{Corollary}
\newtheorem{propriete}{Property}
\newtheorem{defi}{Definition}

\newcommand{\R}{\mathbb{R}}

\newcommand{\PP}{\mathbb{P}}

\begin{document}

\title{Goodness of fit statistics for sparse contingency tables}
\author{Audrey Finkler}
\date{}

\maketitle


\numberwithin{equation}{section}
\setcounter{secnumdepth}{3}

\begin{abstract}
Statistical data is often analyzed as a contingency table, sometimes with empty cells called zeros.
Such sparse tables can be due to scarse observations classified in numerous categories, as for example in genetic association studies.
Thus, classical independence tests involving Pearson's chi-square statistic $Q$ or Kullback's minimum discrimination information statistic $G$ cannot be applied because some of the expected frequencies are too small.
More generally, we consider goodness of fit tests with composite hypotheses for sparse multinomial vectors and suggest simple corrections for $Q$ and $G$ that improve and generalize known procedures such as Ku's.
We show that the corrected statistics share the same asymptotic distribution as the initial statistics.
We produce Monte Carlo estimations for the type I and type II errors on a toy example.
Finally, we apply the corrected statistics to independence tests on epidemiological and ecological data.
\end{abstract}

\section{Introduction and notations} \label{sec1}

Physical, sociological or biological surveys often lead to data presented as contingency tables.
These surveys aim at studying relationships such as total, mutual, partial or conditional independence between several characters in a population. 
Table notations are very useful to formulate the test and make the corresponding hypotheses explicit, but can be cumbersome when the number of characters exceeds three.
For theoretical results, we therefore use vector notations instead, and we reformulate the independence test as a multinomial goodness of fit test.

\subsection{Goodness of fit tests}

Let $p=(p_1,\ldots,p_R)$ be a probability vector of dimension $R$ where $R \geqslant 2$ is the total number of cross-classifying categories.
Let $n$ be the sample size and $x=(n_1,\ldots,n_R)$ the vector of observed frequencies, realization of $X=(N_1,\ldots,N_R)$ with distribution $\mathcal M(n;p)$, multinomial distribution with parameters $n$ and $p$.
We denote by $p^0$ the probability vector under the null hypothesis and consider the following test :
\begin{equation}
{\cal H}_0 :\quad p=p^0 \quad \text{ against } \quad {\cal H}_1 :\quad p \neq p^0.
\end{equation}

Popular goodness of fit statistics include Pearson's chi-square statistic $Q$ and Kullback's minimum discrimination information statistic $G$, defined in~\cite{r2,r1}.
For two probability distributions $p$ and $p'$, these are written :
\begin{equation}
Q_p(p')=n \sum_{r=1}^{R} \frac{(p'_r-p_r)^2}{p_r},
\end{equation}
and
\begin{equation} \label{Ggene}
G_p(p')=2n \sum_{r=1}^{R} p'_r \ln \frac{p'_r}{p_r}.
\end{equation}
They belong to the power divergence statistics family $\{ RC^{\lambda}, \lambda \in \R\}$ defined by Read and Cressie in \cite{r3}, respectively for $\lambda=1$ and $\lambda \to 0$, where :
\begin{equation}
RC^{\lambda}_p(p')=\frac{2n}{\lambda (\lambda + 1) } \sum_{r=1}^R p'_r \left[ \left( \frac{p'_r}{p_r} \right)^{\lambda}-1 \right].
\end{equation}
For a review on goodness of fit testing methods and statistics, see \cite{r10}.

The vector $p^0$ is not always completely specified.
We assume that $p^0$ is a known function of an unknown parameter $\theta$ of $\Theta \subseteq \R^s$ with $s<R-1$, and denote $p^0=p^0(\theta)$ with $\theta=(\theta_1,\ldots,\theta_s)$.
We suppose that the functions $\theta \mapsto p^0(\theta)$ we consider here are bijective and estimate $p^0(\theta)$ by $p^{*0}=p^0(\theta^*)$ where $\theta^*$ is the maximum likelihood estimator of $\theta$.
As the probability $p$ is generally unknown, we also estimate the $p_{r}$ for $r$ in $\{1,\ldots,R\}$ by their maximum likelihood estimators $p^*_r=n_{r}/n$.
Throughout this paper, underlying indexes $n$ are omitted for simplicity of notation, and we consider the statistics $Q_{p^{*0}}(p^*)$ and $G_{p^{*0}}(p^*).$

Read and Cressie show that under Birch's regularity conditions, see \cite{r11}, the statistics $RC^{\lambda}$ are asymptotically equivalent and share a common chi-square limit distribution:
\begin{thm} \label{asymp}
\begin{equation} 
\forall \lambda \in \R, \quad \lim_{n \to +\infty} \mathcal L_{{\cal H}_0}(RC^{\lambda}_{p^{*0}}(p^*))=\chi^2_{R-s-1}.
\end{equation}
\end{thm}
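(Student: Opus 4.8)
The plan is to establish the result in two stages: first prove the limiting distribution for a single, analytically tractable member of the family (the Pearson statistic $Q=RC^1$), then show that every other $RC^\lambda$ is asymptotically equivalent to it, so that the common $\chi^2_{R-s-1}$ limit transfers. This is the standard route because $Q$ has a convenient quadratic form that interacts cleanly with the asymptotic normality of the multinomial counts, whereas the general $RC^\lambda$ expression is awkward to attack directly.

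For the first stage I would begin from the multivariate central limit theorem for the multinomial vector: under ${\cal H}_0$, the centered and scaled frequencies $\sqrt{n}\,(p^*_r-p^0_r(\theta))$ converge to a multivariate normal. Here $\theta$ is not known, so $p^{*0}=p^0(\theta^*)$ with $\theta^*$ the maximum likelihood estimator; the key is that Birch's regularity conditions guarantee that $\theta^*$ is asymptotically efficient and admits a linearization $\sqrt{n}\,(\theta^*-\theta)=O_P(1)$ expressible through the score. Substituting this expansion into $Q_{p^{*0}}(p^*)=n\sum_r (p^*_r-p^{*0}_r)^2/p^{*0}_r$ and performing a Taylor expansion of $p^0$ around $\theta$, one writes $Q$ as a quadratic form $\X^\top \X + o_P(1)$ in a Gaussian vector whose covariance is an orthogonal projection. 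Estimation of the $s$ parameters removes $s$ degrees of freedom beyond the one already lost to the constraint $\sum_r p^*_r=1$, so the projection has rank $R-s-1$, yielding the $\chi^2_{R-s-1}$ limit by Cochran's theorem.

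For the second stage I would show $RC^\lambda_{p^{*0}}(p^*)=Q_{p^{*0}}(p^*)+o_P(1)$ for every fixed $\lambda$. Writing $p^*_r/p^{*0}_r=1+\delta_r$ with $\delta_r=O_P(n^{-1/2})$ under ${\cal H}_0$, I would Taylor-expand the bracketed term $(1+\delta_r)^\lambda-1=\lambda\delta_r+\tfrac{\lambda(\lambda+1)}{2}\delta_r^2+O(\delta_r^3)$. Multiplying by $2n\,p^*_r/(\lambda(\lambda+1))$ and summing, the linear term vanishes because $\sum_r p^*_r=\sum_r p^{*0}_r=1$, the quadratic term reproduces exactly $n\sum_r (p^*_r-p^{*0}_r)^2/p^{*0}_r=Q$ up to lower order, and the cubic remainder is $n\cdot O_P(n^{-3/2})=o_P(1)$. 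Thus all members of the family collapse to the same leading quadratic form, and the limit law is identical; the case $\lambda\to 0$ (the statistic $G$) follows by the analogous expansion of the logarithm.

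The main obstacle is the first stage, specifically handling the estimated parameter $\theta^*$: one must verify that the maximum likelihood estimation forces the Gaussian limit of $\sqrt{n}(p^*-p^{*0})$ to lie in a subspace of the correct dimension, which requires the full strength of Birch's conditions (differentiability of $\theta\mapsto p^0(\theta)$, nonsingularity of the associated Jacobian, and the interior-point assumption on $\theta$). The uniform-tightness control needed to discard the remainder terms in both stages is routine by comparison, but the bookkeeping of the two lost degrees of freedom is where errors typically creep in and deserves care.
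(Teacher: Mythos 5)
First, a point of comparison: the paper itself does not prove Theorem \ref{asymp}; it recalls it from Read and Cressie \cite{r3}, under Birch's regularity conditions \cite{r11}. Your two-stage plan --- establish the $\chi^2_{R-s-1}$ limit for Pearson's $Q=RC^1$ via Birch's linearization of $\theta^*$ and a rank-$(R-s-1)$ projection, then show that every $RC^\lambda$ equals $Q+o_P(1)$ --- is precisely the structure of that standard proof, so your strategy is the right one and matches the cited argument.

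However, stage 2 as you wrote it contains two false steps whose errors happen to cancel. The Taylor expansion is wrong: $(1+\delta)^\lambda-1=\lambda\delta+\frac{\lambda(\lambda-1)}{2}\delta^2+O(\delta^3)$, with coefficient $\lambda(\lambda-1)/2$, not $\lambda(\lambda+1)/2$. And the linear term does not vanish the way you claim: after multiplying by $p^*_r$ and summing, it is $\lambda\sum_r p^*_r\delta_r$, and since $p^*_r=p^{*0}_r(1+\delta_r)$ one gets $\sum_r p^*_r\delta_r=\sum_r(p^*_r-p^{*0}_r)+\sum_r p^{*0}_r\delta_r^2=0+Q/n$; after the factor $2n/(\lambda(\lambda+1))$ this contributes $2Q/(\lambda+1)$, which is of the same order as $Q$, not negligible. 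What the constraint $\sum_r p^*_r=\sum_r p^{*0}_r=1$ actually annihilates is $\sum_r p^{*0}_r\delta_r$. The correct bookkeeping expands the whole summand,
\begin{equation*}
p^*_r\left[\left(\frac{p^*_r}{p^{*0}_r}\right)^\lambda-1\right]
= p^{*0}_r(1+\delta_r)\left[(1+\delta_r)^\lambda-1\right]
= p^{*0}_r\left[\lambda\delta_r+\frac{\lambda(\lambda+1)}{2}\,\delta_r^2+O(\delta_r^3)\right],
\end{equation*}
where the extra factor $(1+\delta_r)$ is exactly what turns $\lambda(\lambda-1)/2$ into $\lambda(\lambda+1)/2$; now the linear part sums to zero by the constraint, the quadratic part gives $\frac{\lambda(\lambda+1)}{2}\cdot Q/n$ to leading order, and after normalization $RC^\lambda=Q+n\,O_P(n^{-3/2})=Q+o_P(1)$ as desired. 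So your conclusion stands, but only because your two mistakes compensate; as written, the step ``the linear term vanishes'' would fail. Two smaller remarks: the exceptional values $\lambda=0$ and $\lambda=-1$, where the normalizing factor $\lambda(\lambda+1)$ vanishes, must both be handled as limits (you mention $\lambda\to 0$ but not $\lambda\to -1$), and the bound $\delta_r=O_P(n^{-1/2})$ requires $\min_r p^{*0}_r$ bounded away from $0$, so Birch's conditions are needed in stage 2 as well, not only in stage 1.
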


This implies the following result :

\begin{cor}
\begin{equation} 
\lim_{n \to +\infty} \mathcal L_{{\cal H}_0}(Q_{p^{*0}}(p^*))=\lim_{n \to +\infty} \mathcal L_{{\cal H}_0}(G_{p^{*0}}(p^*))=\chi^2_{R-s-1}.
\end{equation}
\end{cor}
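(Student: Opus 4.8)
The plan is to recognize $Q$ and $G$ as particular members of the power divergence family $\{RC^{\lambda}\}$ and then simply invoke Theorem~\ref{asymp} at the corresponding values of $\lambda$. The excerpt already records that $Q = RC^{\lambda}$ for $\lambda = 1$ and that $G = RC^{\lambda}$ in the limit $\lambda \to 0$, so the entire task reduces to confirming these two identifications and specializing the asymptotic result. No new convergence in distribution has to be established.

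First I would verify the exact algebraic identity $Q_p(p') = RC^1_p(p')$. Setting $\lambda = 1$ in the definition of $RC^{\lambda}$ gives $RC^1_p(p') = n \sum_{r} p'_r \left( p'_r/p_r - 1 \right) = n \sum_{r} \left( (p'_r)^2/p_r - p'_r \right)$; expanding the square in $Q_p(p')$ and using $\sum_{r} p'_r = \sum_{r} p_r = 1$ yields the same expression $n \left( \sum_{r} (p'_r)^2/p_r - 1 \right)$. Since this identity holds pointwise, before any passage to the limit in $n$, applying Theorem~\ref{asymp} with $\lambda = 1$ immediately gives the claimed $\chi^2_{R-s-1}$ limit for $Q_{p^{*0}}(p^*)$.

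Next I would treat $G$ as the continuous extension of $RC^{\lambda}$ at $\lambda = 0$. Writing $(p'_r/p_r)^{\lambda} = \exp\left( \lambda \ln(p'_r/p_r) \right) = 1 + \lambda \ln(p'_r/p_r) + O(\lambda^2)$, the bracketed term supplies a factor $\lambda$ that cancels the $\lambda$ in the denominator $\lambda(\lambda+1)$, leaving $\lim_{\lambda \to 0} RC^{\lambda}_p(p') = 2n \sum_{r} p'_r \ln(p'_r/p_r) = G_p(p')$. Thus $G = RC^0$ under the convention defining $RC^0$ by this limit, and Theorem~\ref{asymp}, asserted for every $\lambda \in \R$ (including $\lambda = 0$), yields the $\chi^2_{R-s-1}$ limit for $G_{p^{*0}}(p^*)$ as well.

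I expect essentially no obstacle here: both limiting distributions are inherited directly from Theorem~\ref{asymp} rather than rederived. The only point deserving a word of care is the case of $G$, where one must check that the limit $\lambda \to 0$ used to \emph{define} $RC^0$ is the same value of $\lambda$ at which Theorem~\ref{asymp} is stated to hold. Since the theorem ranges over all real $\lambda$ and $RC^0$ is precisely the continuous extension computed above, no interchange of the limits in $n$ and in $\lambda$ is actually required, and the corollary follows at once.
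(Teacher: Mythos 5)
Your proposal is correct and follows essentially the same route as the paper, which presents this corollary as an immediate consequence of Theorem~\ref{asymp} via the identifications $Q = RC^{1}$ and $G = \lim_{\lambda \to 0} RC^{\lambda}$. Your explicit verification of these two identifications simply fills in details the paper leaves implicit.
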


This asymptotic result is a consequence of the Central Limit Theorem.
Classical empirical limitations include that the sample size $n$ must be over~$30$ and that all expected frequencies must be over $5$.
Usually $G_{p^{*0}}(p^*)$ is preferred to $Q_{p^{*0}}(p^*)$ because it is less sensitive to small cell frequencies.
Read and Cressie recommend the use of $RC^{2/3}_{p^{*0}}(p^*)$ instead for $n \geqslant 10$ and a minimum expected frequency over $1$.
Sometimes, even the lower bound $0.5$ is accepted for the expected frequencies.
A review on these conditions can be found in \cite{r4}.

\subsection{Sparse tables}

When there are too few subjects in the study or when the classifying categories are too numerous, the table comprises one or several empty cells called random zeros.
The table is then called sparse and it is likely that at least one cell has an expected frequency below $0.5$.
Random zeros would not appear if the sample was of sufficient size.
Structural zeros, corresponding to cells with an expected probability of zero, are not considered here and should be suppressed.
We therefore assume that the following condition is satisfied :
\begin{equation} \label{nonnul}
p^{*0}_r \neq 0, \: r \in \{1,\ldots,R\}.
\end{equation}

Sparse tables can nonetheless be tested for independence, for example by regrouping cells so that the condition on the expected frequencies is satisfied.
However this procedure is not always data relevant.
Fisher's exact test given in \cite{r5} applies without restrictions, except that it becomes numerically unmanageable when the table dimension grows.
This leads to the use of Monte Carlo simulation methods as explained in \cite{r6}.

The approach we propose here consists in correcting the historical statistics $Q_{p^{*0}}(p^*)$ and $G_{p^{*0}}(p^*)$ according to the number of zero cells, by generalizing and improving a method designed by Ku in \cite{r7}.  

\section{Corrections for Pearson's and Kullback's statistics}

Let $C$ be the random variable giving the number of zeros in the vector or the contingency table, and $c$ a realization of $C$ with $R-c \geqslant 1$.
For simplicity, we assume that $n_1=n_2=\dots=n_c=0$ and that $n_j \geqslant 1$ for all $ j$ in $ \{c+1,\ldots,R\}$. 
The maximum likelihood estimator $p^*=(0,\ldots,0,n_{c+1}/n,\ldots,n_R/n)$ underestimates the $p_i$ for $i$ in $\{1,\ldots,c\}$ and overestimates the $p_j$ for $j$ in $\{c+1,\ldots,R\}$.
Its use when $c \neq 0$ thus has consequences on the statistics $Q_{p^{*0}}(p^*)$ and $G_{p^{*0}}(p^*)$.

\subsection{Ku's correction for one zero}

Ku argues in \cite{r7} that $G_{p^{*0}}(p^*)$ tends to inflate with respect to $Q_{p^{*0}}(p^*)$ when $C$ grows.
He then proposes to subtract~$1$ from $G_{p^{*0}}(p^*)$ for each zero, that is $c$ in total.
He proves the asymptotic equivalence between $G_{p^{*0}}(p^*)$ and its corrected version only for $c=1$.
I will explain why his reasoning is inconsistent.
First, he considers a new statistic which is not a member of the power divergence family.
Moreover, he uses the straightforward Lemma \ref{lem1} to deduce the approximation:
\begin{equation} \label{approx}
2 n_r \ln \left( \frac{n_r}{n p_r^{*0}} \right) \simeq \frac{n_r ^2 - (n p_r^{*0})^2}{n p_r^{*0}},
\end{equation}
for $a=n_r/n$, $b= p_r^{*0}$ and $n_r=0$, despite the fact that $a=0$ and $1/a$ is not bounded.

\begin{lem} \label{lem1}
For each $a, b >0$ such that $a<2b$, $b<2a$ and the quantities $1/a$ and $1/b$ are bounded, we have :
$$ \ln \left( \frac{a}{b} \right) = \frac{a^2-b^2}{2ab}+o(a-b).$$ 
\end{lem}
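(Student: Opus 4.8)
The plan is to reduce the claim to a first-order Taylor expansion of a single auxiliary function and to read off that both its value and its derivative vanish at the point $a=b$. Concretely, I would fix $b>0$ and set
\[
f(a) = \ln\left(\frac{a}{b}\right) - \frac{a^2-b^2}{2ab} = \ln a - \ln b - \frac{a}{2b} + \frac{b}{2a},
\]
which is smooth on $\{a>0\}$. The whole lemma then amounts to showing $f(a)=o(a-b)$ as $a\to b$.

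First I would check $f(b)=0$ directly. Next I would differentiate, obtaining $f'(a)=\frac{1}{a}-\frac{1}{2b}-\frac{b}{2a^2}$, and evaluate $f'(b)=\frac{1}{b}-\frac{1}{2b}-\frac{1}{2b}=0$. Since $f$ is differentiable at $b$, the very definition of the derivative gives $f(a)=f(b)+f'(b)(a-b)+o(a-b)$, and because both $f(b)$ and $f'(b)$ vanish this collapses to $f(a)=o(a-b)$, which is exactly the asserted identity. An equivalent route, if one prefers an explicit remainder, is to write $u=(a-b)/b$ and match $\ln(1+u)=u-\tfrac{u^2}{2}+\cdots$ against $\frac{a^2-b^2}{2ab}=\frac{u(2+u)}{2(1+u)}=u-\tfrac{u^2}{2}+\cdots$; the first two terms cancel, leaving a remainder of order $u^3$, hence of order $(a-b)^3$.

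The role of the hypotheses, and the only genuinely delicate point, is \emph{uniformity}. The pointwise statement $f(a)=o(a-b)$ follows from differentiability alone, but in the intended application $b=p_r^{*0}$ varies with $r$ and $n$, so one wants the little-$o$ to hold uniformly. Here the assumptions that $1/a$ and $1/b$ are bounded (that is, $a,b\geqslant\delta>0$) together with $a<2b$ and $b<2a$ (that is, $\tfrac12<a/b<2$) confine $(a,b)$ to a region on which $f''(a)=-\frac{1}{a^2}+\frac{b}{a^3}$ is uniformly bounded. I would therefore invoke Taylor's theorem with Lagrange remainder, $f(a)=\tfrac12\, f''(\xi)(a-b)^2$ for some $\xi$ between $a$ and $b$, and bound $|f''(\xi)|$ by a constant depending only on $\delta$ on this region; this yields $|f(a)|\leqslant M(a-b)^2$, which is $o(a-b)$ uniformly.

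The main obstacle is thus not the cancellation of the leading terms, which is immediate from the two evaluations at $a=b$, but rather making precise and justifying this uniform control of the remainder. That is exactly what the four hypotheses on $a$ and $b$ are present to supply, and I would make sure to state clearly in which sense (uniform over the admissible region) the $o(a-b)$ is to be understood, since the subsequent failure of the approximation~\eqref{approx} at $a=0$ hinges precisely on those hypotheses being violated.
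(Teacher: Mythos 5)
Your proof is correct, but there is nothing in the paper to compare it against: the paper states Lemma \ref{lem1} as ``straightforward'' and never proves it --- the appendix only contains proofs of Proposition \ref{condition} and Lemma \ref{lem2}. Your argument supplies the missing details cleanly: the reduction to $f(a)=\ln a-\ln b-\frac{a}{2b}+\frac{b}{2a}$, the two evaluations $f(b)=f'(b)=0$, and the Taylor--Lagrange bound $|f(a)|\leqslant M(a-b)^2$ are all correct (note $\xi$ between $a$ and $b$ satisfies $\xi\geqslant\min(a,b)>b/2$ by the hypothesis $a<2b$... wait, by $b<2a$, so $f''(\xi)$ is indeed uniformly controlled by a constant depending only on the lower bound $\delta$). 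Your emphasis on uniformity is also exactly the right reading of the otherwise ambiguous $o(a-b)$: the hypotheses $\frac12<a/b<2$ and $a,b\geqslant\delta$ are what make the error term uniform, and their violation at $a=n_r/n=0$ is precisely the paper's objection to Ku's use of the approximation \eqref{approx}. If anything, you could sharpen the conclusion: your Lagrange-remainder argument actually yields the stronger statement $\ln(a/b)=\frac{a^2-b^2}{2ab}+O((a-b)^2)$ uniformly on the admissible region, of which the stated $o(a-b)$ form is an immediate consequence.
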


The expression on the left in \eqref{approx} is null whereas the one on the right is negative.
The sum over $r$ of the left-hand side gives $G_{p^{*0}}(p^*)$ and we recognize $Q_{p^{*0}}(p^*)$ in the sum of the right-hand side.
However, unlike Ku seems to think, zero and non-zero cells tend to compensate for each other and the approximation \eqref{approx} can not be extended to the corresponding sum.
There is therefore no behavior of $G$ and $Q$ that we can deduce from this.
Finally, he illustrates this asymptotic result on a small sample of size $n=10$.

We propose new corrections for both statistics, based on Ku's correction and a likelihood inequality that we present in the next subsection.

\subsection{Likelihood inequality}

Let us consider the following inequality coming from a likelihood reasoning : the sample vector we observe can be thought of more likely to happen than any other possible vector, since it is the one we actually observed.
With exactly $c$ zeros and $R-c$ non-zero cells observed, so for all $m \leqslant c$ and $n'_j$ such that $\displaystyle n'_j \leqslant n_j, \ \forall j \in \{c+1,\ldots,R\}$ and 
$ n=\sum_{i=1}^{m} n'_i + \sum_{j=c+1}^{R}n'_j $ :
\begin{multline} \label{likely}
\PP(N_1=0,\ldots,N_c=0,N_{c+1}=n_{c+1},\ldots,N_R=n_R) \geqslant \\
  \PP(N_1=n'_1,\ldots, N_m=n'_m,N_{m+1}=\ldots=N_c=0,N_{c+1}=n'_{c+1},\ldots,N_R=n'_R).
\end{multline}

We give in Proposition \ref{condition} a sufficient condition on the $p_r$ for \eqref{likely} to be satisfied, and prove this statement in Appendix \ref{app}.

\begin{prop} \label{condition}
The inequality \eqref{likely} is satisfied under the following assumption : 
\begin{equation} \label{cond2}
p_i \leqslant \frac{ p_j}{n},\quad \forall i \in \{1,\ldots,c\}, \quad \forall j \in \{c+1,\ldots,R\}.
\end{equation}
\end{prop}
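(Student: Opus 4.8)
The plan is to compare the two multinomial probabilities in \eqref{likely} directly, by forming the ratio of the right-hand side to the left-hand side and showing it does not exceed $1$. First I would substitute the explicit multinomial mass function into both sides. The left-hand side, corresponding to the configuration with $c$ zeros, reduces to $\frac{n!}{\prod_{j=c+1}^R n_j!}\prod_{j=c+1}^R p_j^{n_j}$, while the right-hand side carries the extra probability factors $\prod_{i=1}^m p_i^{n'_i}$ together with the modified factorials $\prod_{i=1}^m n'_i!$ and $\prod_{j=c+1}^R n'_j!$. The common $n!$ cancels, so the whole question becomes whether a product of factorial ratios times a product of probability ratios is at most $1$.

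Next I would exploit the constraint on the counts. Setting $\delta_j = n_j - n'_j \geqslant 0$, the hypothesis $n = \sum_{i=1}^m n'_i + \sum_{j=c+1}^R n'_j$ together with $n = \sum_{j=c+1}^R n_j$ yields the mass-balance identity $\sum_{i=1}^m n'_i = \sum_{j=c+1}^R \delta_j =: S$. This identity is the hinge of the argument: it guarantees that the ``mass'' added to the formerly empty cells equals the ``mass'' removed from the non-empty ones, so that the corresponding exponents match when I bound the probability factors.

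I would then bound the factorial part using $1/n'_i! \leqslant 1$ and the elementary estimate $n_j!/n'_j! = n_j(n_j-1)\cdots(n'_j+1) \leqslant n^{\delta_j}$, valid since each of its $\delta_j$ factors is at most $n_j \leqslant n$. After this step the ratio is dominated by $\prod_{i=1}^m p_i^{n'_i}\cdot\prod_{j=c+1}^R (n/p_j)^{\delta_j}$. Applying \eqref{cond2} in the form $p_i \leqslant (\min_{j} p_j)/n$ gives $\prod_i p_i^{n'_i} \leqslant (\min_j p_j/n)^S$ and $\prod_j (n/p_j)^{\delta_j} \leqslant (n/\min_j p_j)^S$, whose product equals $1$ by the mass-balance identity; hence the ratio is at most $1$, which is exactly \eqref{likely}.

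Every individual estimate here is elementary, so I do not expect a genuine obstacle. The one point that must be seen clearly is \emph{why} the factor $n$ appears in condition \eqref{cond2}: it is precisely the extra power $n^{\delta_j}$ produced by the crowded factorial ratio $n_j!/n'_j!$ that the hypothesis has to absorb, and the matching of exponents through $S$ is what makes this absorption exact rather than merely approximate. Keeping the bookkeeping of the exponents consistent is the only place where care is needed.
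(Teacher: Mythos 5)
Your proof is correct, and strategically it is the same argument as the paper's: cancel the common $n!$, use the mass-balance identity $\sum_{i=1}^m n'_i=\sum_{j=c+1}^R(n_j-n'_j)$, bound the factorials (via $n'_i!\geqslant 1$ and a power-of-$n$ bound on the factorial ratio over the non-empty cells), and then let \eqref{cond2} cancel the accumulated powers of $n$ against the probability ratios, the matching of exponents being exactly the mass-balance identity. Where you genuinely differ is in the handling of the factorial ratio, and there your version is the sound one. The paper claims \eqref{likely} is \emph{equivalent} to \eqref{lab2}, in which the exact ratio $n_j!/n'_j!$ has been replaced by $(n_j-n'_j+1)!$; this is not an equivalence, and the direction the paper actually needs (that \eqref{lab2} implies \eqref{likely}) would require $n_j!/n'_j!\leqslant(n_j-n'_j+1)!$ for every $j$, which fails as soon as some $n'_j\geqslant 2$ (for instance $n_j=5$, $n'_j=3$ gives $n_j!/n'_j!=20>3!=6$). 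Its intermediate display also carries a spurious exponent, $n^{(n_j-n'_j+1)}$ where \eqref{cond2} only justifies $n^{(n_j-n'_j)}$. Your direct estimate $n_j!/n'_j!=n_j(n_j-1)\cdots(n'_j+1)\leqslant n^{n_j-n'_j}$ keeps the exact ratio throughout, so no equivalence claim is needed: the ratio of the two multinomial probabilities is bounded by $\prod_{i=1}^m p_i^{n'_i}\prod_{j=c+1}^R(n/p_j)^{n_j-n'_j}\leqslant 1$, the last inequality being \eqref{cond2} applied with matched exponents. In short, your write-up follows the paper's strategy but with exact bookkeeping, and it can serve as a corrected version of the paper's own proof.
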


\subsection{Corrections}

We propose an estimator $\hat p$ for $p$, different from the maximum likelihood estimator, with the following form :
\begin{equation}
\begin{cases}
\displaystyle \hat p_i &= a, \quad \quad \quad \quad \quad \quad \quad \ \ \forall i \in \{1,\ldots,c \}, \\
\displaystyle \hat p_j &= \displaystyle \frac{n_j}{n^b}-d,  \quad \quad \quad \quad \quad \: \forall j \in \{c+1,\ldots,R \},\\
\end{cases}
\end{equation}
where $a=a_n,\ b=b_n$ and $d=d_n$ are random variables depending on $n$ designed to compensate for the under- and overestimations due to $p^*$.
We thus take them positive with $0 < b <1$, such that $b$ inflates the modified maximum likelihood estimator $n_j/n^b$ and such that $d$ controls the related rise.

This new probability vector allows us to define corrected statistics $Q_{p^{*0}}(\hat p)$ and $G_{p^{*0}}(\hat p)$, provided that the parameters $a,\ b$ and $d$ satisfy several conditions.
Forcing the summation of the $\hat p_r$ to $1$ implies that
$(R-c)d=ac+n^{1-b}-1$,
and thus allows us to define $Q_{p^{*0}}(\hat p^{ab})$ and $G_{p^{*0}}(\hat p^{ab})$ with $\hat p^{ab}$ such that :
\begin{defi}
\begin{equation}
\label{estphat}
\begin{cases}
\displaystyle \hat p_i^{ab} &= a, \quad \quad \quad \quad \quad \quad \quad \quad \quad \forall i \in \{1,\ldots,c \}, \\
\displaystyle \hat p_j^{ab} &= \displaystyle \frac{n_j}{n^b}-\frac{ac+n^{1-b}-1}{R-c}, \quad \: \forall j \in \{c+1,\ldots,R \}.\\
\end{cases}
\end{equation}
\end{defi}
Note that for $c=0$, we fix $a=0$ and $b=1$.

Before considering the other conditions on $\hat p^{ab}$, let us first give some notations.
Let $\underline n$ be $\min_{j \in \{c+1,\ldots,R\}} \{n_j\}$ and $\overline n$ be $\max_{j \in \{c+1,\ldots,R\}} \{n_j\}.$
Let $\underline{\underline{n}}$ stand for $n-\underline n (R-c)$ and $\overline{\overline{n}}$ for $\overline n (R-c)-n$.
We dismiss the uniformly distributed case where :
\begin{equation}
\underline n = \overline n=n_j = \frac{n}{R-c}, \: \forall j \in \{c+1,\ldots,R\},
\end{equation}
thus guaranteeing that $\underline{\underline{n}}$ and $\overline{\overline{n}}$ are positive.
Let 
\begin{equation}
b_{\text{min}}=\max \left(0,\frac{\ln \left( \overline{\overline{n}} /(R-1)\right)}{\ln(n)},\frac{\ln (\underline{\underline{n}})}{\ln(n)},\frac{\ln \left( \overline n - \underline n \right)}{\ln(n)}\right),
\end{equation}
\begin{equation}
a_{\text{min}}(b)=\max \left(0,\frac{(\overline n-n^b)(R-c)+n^b}{c n^b}\right),
\end{equation}
and 
\begin{equation}
a_{\text{max}}(b)=\min \left(1,\frac{n^b-\underline{\underline{n}}}{c n^b},\frac{n^b-\underline{\underline{n}}}{n^b (n(R-c)+c)}\right),
\end{equation}
Proposition \ref{condition} is applied to $\hat p^{ab}.$
Together with inequalities $\displaystyle 0<\hat p_r^{ab} <1$, for all $r$ in $\{1,\ldots,R\}$ it is then equivalent to these conditions on $a$ and $b$:
\begin{equation} \label{interb}
b_{\text{min}}<b<b_{\text{max}}=1,
\end{equation}
and 
\begin{equation} \label{intera}
a_{\text{min}}(b)<a< a_{\text{max}}(b).
\end{equation}

We want to make a practical choice among possible values of $\hat p^{ab}$ ensuring us that it is as far from $p^*$ as possible.
We therefore fix $b$ in \eqref{interb} quite far from $1$ as a convex combination of $b_{\text{min}}$ and $b_{\text{max}}$ with an empirical parameter $h$ equal to $0.1$.
For this value of $b$ we choose $a$ near the upper limit of the interval in \eqref{intera}, that is :
\begin{equation}
b=h b_{\text{max}}+(1-h) b_{\text{min}} \quad \text{ and } \quad a=a_{\text{max}}(b)-\epsilon,
\end{equation}
where $\epsilon$ is a small constant designed to eliminate boundary effects.

The final expressions we get for the corrected statistics $Q^{ab}=Q_{p^{*0}}(\hat p^{ab})$ and $G^{ab}=G_{p^{*0}}(\hat p^{ab})$ of $Q=Q_{p^{*0}}(p^*)$ and $G=G_{p^{*0}}(p^*)$ are:
\begin{equation}
Q^{ab}= n^{2(1-b)} Q-f(a,b),
\end{equation}
with
\begin{multline}
f(a,b)=n \left(1-n^{2(1-b)}+ \frac{2 n^{1-b}(ac+n^{1-b}-1)}{R-c} \sum_{j=c+1}^{R} \frac{n_j}{n p^{*0}_j} \right.\\
\left.-a^2 \sum_{i=1}^{c} \frac{1}{p^{*0}_i}-\left(\frac{ac+n^{1-b}-1}{R-c}\right)^2 \sum_{j=c+1}^{R} \frac{1}{p^{*0}_j} \right),
\end{multline}
and
\begin{equation}
G^{ab}=n^{1-b} G-g(a,b),
\end{equation}
where
\begin{multline}
g(a,b)= 2n \left( \frac{ac+n^{1-b}-1}{R-c} \sum_{j=c+1}^{R} \ln \left(\frac{n_j (R-c)- n^b(ac+n^{1-b}-1)}{p^{*0}_j n^b (R-c)} \right) \right. \\
\left. -a\sum_{i=1}^{c} \ln \left( \frac{a}{p_i^{*0}} \right)-n^{1-b} \sum_{j=c+1}^{R} \frac{n_j}{n} \ln \left( \frac{n_j (R-c)-n^b(ac+n^{1-b}-1)}{n_j n^{b-1} (R-c)} \right) \right).
\end{multline}

\subsection{Convergence}

In this paragraph, we show the convergence in distribution of $Q^{ab}$ and $G^{ab}$ to a chi-square distribution.
Let us first study the parameter $b$.

\begin{propriete} \label{propb}
Bound $b_{\text{min}}$ is strictly less than $1$.
Moreover, if $\underline n=o(n)$ and $\overline n \sim n$ when $n$ tends to $+ \infty$, then $b_{\text{min}}$ and $b$ tend to $1$.
\end{propriete}

\begin{proof}
Inequalities $\overline n (R-c) < n R$, $\: n-\underline n (R-c)<n$ and $\overline n-\underline n<n$ show that all three components of the maximum defining $b_{\text{min}}$ are strictly less than~$1$. 
Their order $1$ developments as $n$ tends to $+ \infty$ give their convergence to~$1$, with the quantities $R-1$, $R-c$ and $R-c-1$ bounded.
\end{proof}

We also study the asymptotic behavior of $C$, for once denoted $C_n$, and state the following lemma which proof appears in Appendix \ref{app}.

\begin{lem} \label{lem2}
The number of zeros $C_n$ converges almost surely to $0$ as $n$ tends to $+ \infty$.
\end{lem}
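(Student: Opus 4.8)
The plan is to realize all the multinomial vectors on a single probability space — so that almost sure convergence of $C_n$ is even meaningful — and then to show that the event ``there is at least one zero'' occurs only finitely often. First I would introduce a coupling: let $(Y_k)_{k \geqslant 1}$ be i.i.d. random variables valued in $\{1,\ldots,R\}$ with $\PP(Y_k=r)=p_r$, and set $N_r=\sum_{k=1}^{n} \ind\{Y_k=r\}$, so that for each $n$ the vector $(N_1,\ldots,N_R)$ has law $\mathcal M(n;p)$ and all the $C_n$ live on the same space. Since structural zeros have been excluded, every true cell probability is positive, $p_r>0$ for all $r$ in $\{1,\ldots,R\}$.

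Next I would compute, for a fixed cell $r$, the probability that it remains empty after $n$ draws, namely $\PP(N_r=0)=(1-p_r)^n$. Because $0<1-p_r<1$, the series $\sum_{n \geqslant 1}(1-p_r)^n$ converges, so by the first Borel--Cantelli lemma — which requires no independence among the events indexed by $n$ — the event $\{N_r=0\}$ occurs for only finitely many $n$, almost surely. Taking the union over the finite index set $\{1,\ldots,R\}$, I would conclude that almost surely there is a random rank $N_0$ beyond which all cells are nonempty, that is $C_n=0$ for all $n \geqslant N_0$; as $C_n$ is integer-valued and nonnegative this yields $C_n \to 0$ almost surely.

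An equivalent and perhaps cleaner route avoids Borel--Cantelli altogether: for each $r$ let $T_r=\min\{k : Y_k=r\}$ be the first appearance time of category $r$, a geometric variable that is finite almost surely because $p_r>0$. Then $T=\max_{r} T_r$ is a finite maximum of almost surely finite variables, hence finite almost surely, and for $n \geqslant T$ every category has appeared so $C_n=0$. This makes the eventual stabilization of $C_n$ transparent.

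The only genuine subtlety — and the step I would treat most carefully — is the construction of the common probability space, since the statement of almost sure convergence presupposes that all the multinomials are coupled rather than defined separately for each $n$. Once the coupling via $(Y_k)$ is fixed, the events $\{N_r=0\}$ stabilize and the remaining arguments are a routine geometric-series estimate and a finite union; no regularity condition beyond the positivity of the $p_r$ is needed.
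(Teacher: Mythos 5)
Your proof is correct, and while it shares the paper's core mechanism (geometric decay of the probability of an empty cell, then Borel--Cantelli), the decomposition is genuinely different and, in fact, cleaner. The paper works with $C_n$ as a whole: it writes $\PP(C_n=c)=\binom{R}{c}(1-q^0)^n$, where $q^0$ is the total null probability of the empty cells, sums the geometric series, and applies Borel--Cantelli to the events $\{C_n \geqslant 1\}$. You instead argue cell by cell: $\PP(N_r=0)=(1-p_r)^n$, Borel--Cantelli for each fixed $r$, then a union over the finitely many cells. Your route avoids a real weakness of the paper's computation: the identity $\PP(C_n=c)=\binom{R}{c}(1-q^0)^n$ is not exact, since $(1-q^0)^n$ is the probability that a \emph{given} set of $c$ cells is empty with no constraint that the remaining cells be nonempty, the factor $\binom{R}{c}$ treats all $c$-subsets as equally likely, and $q^0$ itself depends on which cells are empty; the bound your argument needs, $\PP(C_n \geqslant 1) \leqslant \sum_{r}(1-p_r)^n$, is a plain union bound and is unimpeachable. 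You also supply two things the paper leaves implicit: the coupling of all the multinomial vectors through a single i.i.d.\ sequence $(Y_k)$, without which almost sure convergence of $C_n$ is not even well posed; and the observation that under this coupling the events $\{N_r=0\}$ are decreasing in $n$, which is what makes your alternative via the first-appearance times $T_r=\min\{k : Y_k=r\}$ (finite a.s.\ since $p_r>0$) the most transparent argument of all --- no Borel--Cantelli needed, only finiteness of a maximum of finitely many a.s.\ finite variables.
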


We recalled in section \ref{sec1} the convergence of Pearson's and Kullback's statistics to a chi-square distribution.
In Theorem \ref{thm7}, we state a similar property for the corrected statistics $Q^{ab}$ and $G^{ab}$.

\begin{thm} \label{thm7}
Under Birch's regularity conditions in \cite{r11}, the estimator $\hat p^{ab}$ defined by \eqref{estphat}, \eqref{interb} and \eqref{intera} is such that :
\begin{equation}
\lim_{n \to +\infty} \mathcal L_{{\cal H}_0} (Q^{ab})=\lim_{n \to +\infty} \mathcal L_{{\cal H}_0} (G^{ab})=\chi^2_{R-s-1}.
\end{equation}
\end{thm}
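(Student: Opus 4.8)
The plan is to reduce the asymptotic behavior of the corrected statistics to that of the uncorrected ones, exploiting the fact that the table ceases to be sparse as $n$ grows. The two ingredients already at hand are the Corollary to Theorem~\ref{asymp}, which gives $\mathcal L_{{\cal H}_0}(Q)\to\chi^2_{R-s-1}$ and $\mathcal L_{{\cal H}_0}(G)\to\chi^2_{R-s-1}$, and Lemma~\ref{lem2}, which states that $C_n\to 0$ almost surely. Since $C_n$ takes values in $\{0,1,\ldots,R-1\}$ and converges almost surely to $0$, for almost every realization there is a random rank $N$ beyond which $C_n=0$. The heart of the argument is that on the event $\{C_n=0\}$ the correction deactivates and $Q^{ab}=Q$, $G^{ab}=G$.

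First I would make this deactivation explicit. When $c=0$ the convention fixes $a=0$ and $b=1$; substituting these into \eqref{estphat} gives, for every $j\in\{1,\ldots,R\}$, the value $\hat p_j^{ab}=n_j/n^{b}-(ac+n^{1-b}-1)/(R-c)=n_j/n-0=n_j/n$, so that $\hat p^{ab}$ coincides with the maximum likelihood estimator $p^*$. Consequently $Q^{ab}=Q_{p^{*0}}(\hat p^{ab})=Q_{p^{*0}}(p^*)=Q$ and likewise $G^{ab}=G$, with no further computation. Equivalently, one verifies that the correction terms vanish: the factors $ac+n^{1-b}-1=0$ and $1-n^{2(1-b)}=0$ annihilate every summand of $f(0,1)$, while in $g(0,1)$ the same vanishing factor kills the first summand, $a=0$ kills the second, and in the third the logarithmic argument collapses to $n_jR/(n_jR)=1$; the only mild care here is checking that these arguments reduce exactly to $1$.

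Combining the two observations, $Q^{ab}=Q$ and $G^{ab}=G$ for all $n\ge N$ almost surely, so the differences $Q^{ab}-Q$ and $G^{ab}-G$ converge to $0$ almost surely, hence in probability. Slutsky's theorem then transfers the limit laws: writing $Q^{ab}=Q+(Q^{ab}-Q)$ with $Q\xrightarrow{\mathcal L}\chi^2_{R-s-1}$ and $Q^{ab}-Q\xrightarrow{\PP}0$, we obtain $Q^{ab}\xrightarrow{\mathcal L}\chi^2_{R-s-1}$, and symmetrically for $G^{ab}$. Property~\ref{propb}, giving $b\to 1$ and hence $n^{2(1-b)}\to 1$ and $n^{1-b}\to 1$, is consistent with this picture and confirms that the multiplicative factors remain inert even along the sparse regime.

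The main obstacle is conceptual rather than computational, and it lies outside the proof of Theorem~\ref{thm7} itself: everything rests on Lemma~\ref{lem2}, whose proof is deferred to Appendix~\ref{app} and relies on assumption~\eqref{nonnul}, namely $p^{*0}_r\neq 0$, together with a Borel--Cantelli argument showing that each fixed cell is empty only finitely often, so that the finite union over the $R$ cells eventually contains no zero. Once that almost-sure extinction of zeros is granted, the remainder is the short substitution $\hat p^{ab}=p^*$ above followed by a standard Slutsky argument, so within the scope of this statement there is no genuine difficulty.
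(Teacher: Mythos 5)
Your proof is correct and follows essentially the same route as the paper: invoke Lemma~\ref{lem2} to get a probability-one set on which $C_n=0$ for all large $n$, observe that the convention $a=0$, $b=1$ then makes $\hat p^{ab}$ coincide with $p^*$ so that $Q^{ab}=Q$ and $G^{ab}=G$, and conclude via Theorem~\ref{asymp}. Your explicit verification that $f(0,1)=g(0,1)=0$ and the Slutsky step merely spell out details the paper leaves implicit.
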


\begin{proof}
We deduce from Lemma \ref{lem2} the existence of a set $\Omega'$ of probability~$1$ on which we can find a rank $n_0$ such that $C_n=0$ for $n \geqslant n_0$.
The variable~$a$ is then set equal to $0$ and the variable $b$ equal to $1$. 
Hence, the estimates $p^*$ and $\hat p^{ab}$ match, so that $Q^{ab}=Q$ and $G^{ab}=G$.
Theorem \ref{asymp} then completes the proof.
\end{proof}

The final two sections are dedicated to proving the relevancy of our corrections through simulations and real data analyses.

\section{Simulations}

In this section, simulations confirm the necessity to correct not only $G$ but also $Q$.
We compute the statistics $Q$, $G$, $RC^{2/3}=RC^{2/3}_{p^{*0}}(p^*)$, $Q^{ab}$ and $G^{ab}$ on $1 \ 000$ vectors of length $R=100$ of total frequency $n=400$ for each of the four multinomial distributions defined in Table \ref{Type1} by $f_1$ to $f_4$.  
Let $E_r$ denote the expected frequencies for $r$ in $\{1,\ldots,100\}$.

\begin{table}
\centering
\caption{Multinomial probabilities $f_1$ to $f_4$.\label{Type1}}
\begin{tabular}{ccc}
\hline
${\cal H}_0$ & $|\{r; \: E_r<0.5\}|$ & Probability \\
\hline
$f_1$ & $20$ & $( \underbrace{0.0002,\ldots,0.0002}_{20 \text{ times }},\underbrace{0.01245,\ldots,0.01245}_{80 \text{ times }} )$ \\
$f_2$ & $50$ & $( \underbrace{0.0002,\ldots,0.0002}_{50 \text{ times }},\underbrace{0.0198,\ldots,0.0198}_{50 \text{ times }} )$ \\
$f_3$ & $70$ & $( \underbrace{0.0002,\ldots,0.0002}_{70 \text{ times }},\underbrace{0.03286667,\ldots,0.03286667}_{30 \text{ times }})$ \\
$f_4$ & $90$ & $( \underbrace{0.0002,\ldots,0.0002}_{90 \text{ times }},\underbrace{0.0982,\ldots,0.0982}_{10 \text{ times }} )$ \\
\hline
\end{tabular}
\end{table}

For each set of vectors sharing the same $c$, we give the quantile of order $1-\alpha=95\%$ for the five statistics considered.
Results are displayed in figure~\ref{CQGetc1}, as well as a line indicating the chi-square quantile of order $1-\alpha=95\%$ with $R-1=99$ degrees of freedom $\chi_{0.95,99}^2=123.22$.
Only the center of each graph should be considered because the quantiles for extreme numbers of zeros are computed on too few observations, sometimes only on one or two among the $1 \ 000$ simulations in total.

\begin{figure}
\begin{center}
\includegraphics[width=12cm,height=18cm]{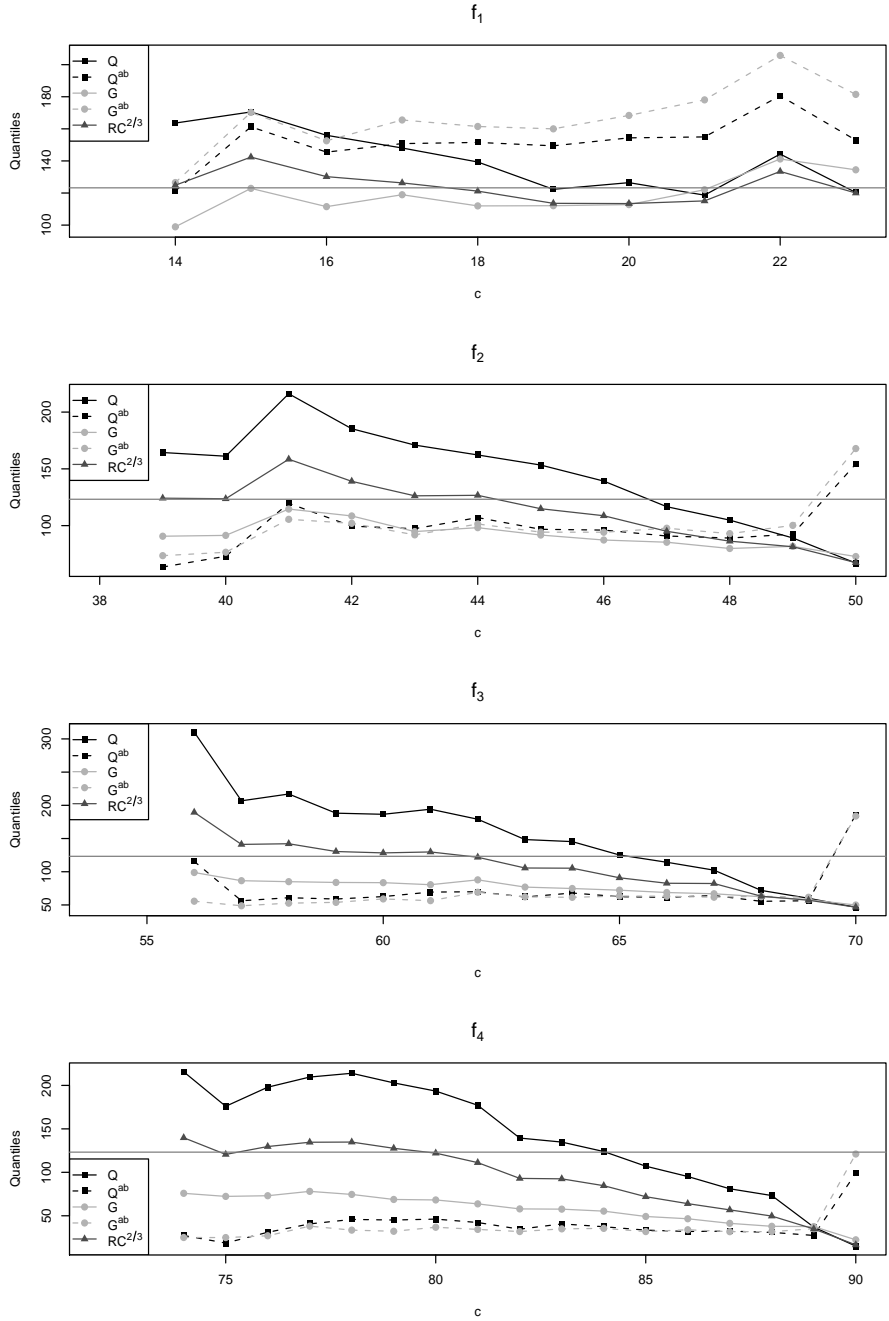}
\caption{Quantiles of order $0.95$ for $Q$, $Q^{ab}$, $G$, $G^{ab}$ and $RC^{2/3}$ as functions of $c$, under null multinomial probabilities $f_1$ to $f_4$, for $1 \ 000$ samples of size $n=400$ and $R=100$ categories. The line represents the threshold $\chi_{0.95,99}^2$.\label{CQGetc1}}
\end{center}
\end{figure}

%

Quantile values of $Q$ tend to explode as $c$ grows whereas $G$ stays quite stable around $\chi_{0.95,99}^2$.
This behavior is the opposite of the one predicted by Ku.
For $f_1$, statistics $Q^{ab}$ and $G^{ab}$ lead to the rejection of the null hypothesis.
For $f_2$ to $f_4$ however, their quantiles lie below the critical line and ${\cal H}_0$ is accepted.
We thus have compensated for the rise of $Q$, and both our corrected statistics are stable.

\begin{table}
\centering
\caption{Empirical type I risks for $Q$, $Q^{ab}$, $G$, $G^{ab}$ and $RC^{2/3}$, for $1 \ 000$ samples of size $n=400$ and $R=100$ categories, at levels $\alpha=0.01$, $0.05$, $0.1$, for vector probabilities $f_1$ to $f_4$.\label{Type1ter}}
\begin{tabular}{cccccccc}
\hline
$\alpha$ & ${\cal H}_0$ & $mode(c)$ & $Q$ & $Q^{ab}$ & $G$ & $G^{ab}$ & $RC^{2/3}$ \\
\hline
$0.01$ & $f_1$ & $19$ & $0.031$ & $0.233$ & $0.003$ & $0.401$ & $0.003$ \\
 & $f_2$ & $47$ & $0.030$ & $0.005$ & $0$ & $0$ & $0$ \\
 & $f_3$ & $65$ & $0.027$ & $0$ & $0$ & $0$ & $0$ \\
 & $f_4$ & $84$ & $0.031$ & $0$ & $0$ & $0$ & $0$ \\
\\
$0.05$ & $f_1$ & $19$ & $0.044$ & $0.352$ & $0.010$ & $0.573$ & $0.017$ \\
 & $f_2$ & $47$ & $0.024$ & $0.005$ & $0$ & $0$ & $0$ \\
 & $f_3$ & $65$ & $0.069$ & $0$ & $0$ & $0$ & $0.006$ \\
 & $f_4$ & $84$ & $0.052$ & $0$ & $0$ & $0$ & $0$ \\
\\
$0.1$ & $f_1$ & $19$ & $0.130$ & $0.479$ & $0.033$ & $0.674$ & $0.039$ \\
 & $f_2$ & $47$ & $0.072$ & $0.005$ & $0$ & $0.005$ & $0$ \\
 & $f_3$ & $65$ & $0.137$ & $0$ & $0$ & $0$ & $0$ \\
 & $f_4$ & $84$ & $0.098$ & $0$ & $0$ & $0$ & $0.006$ \\
\hline
\end{tabular}
\end{table}

This analysis is confirmed by the computation of empirical risks of type~I for $\alpha=0.01$, $0.05$ and $0.1$ as showed in Table \ref{Type1ter}, and by the power study below.
Probabilities $f_1$ to $f_4$ are perturbed into $f'_1$ to $f'_4$ such that for all $j$ in $\{1,2,3,4\}$:
\begin{eqnarray}
\forall i \in \{1,\ldots,10\}, \: f'_j(i)&=&f_j(i)+1/300,\\
\forall i \in \{11,\ldots,90\}, \: f'_j(i)&=&f_j(i),\\
\forall i \in \{91,\ldots,100\}, \: f'_j(i)&=&f_j(i)-1/300.
\end{eqnarray}

Vectors are simulated with probabilities $f_1$ to $f_4$, and goodness of fit for $f'_1$ to $f'_4$ is tested.
The Tables \ref{Type1ter} and \ref{tab2} show that the empirical type I risks are lower for our corrections compared to the classical statistics when $c$ is quite important, whereas the empirical power is much higher for our corrections when $c$ is small.

\begin{table}
\centering
\caption{Empirical powers for $Q$, $Q^{ab}$, $G$, $G^{ab}$ and $RC^{2/3}$, for $1 \ 000$ samples of size $n=400$ and $R=100$ categories, at levels $\alpha=0.05$ for simulated vector probabilities $f_1$ to $f_4$, and null vector probabilities $f'_1$ to $f'_4$.\label{tab2}}
\begin{tabular}{cccccccc}
\hline
 ${\cal H}_0$ & ${\cal H}_1$ & $mode(c)$ & $Q$ & $Q^{ab}$ & $G$ & $G^{ab}$ & $RC^{2/3}$ \\
\hline
$f_1$ & $f'_1$ & $19$ & $0.233$ & $0.853$ & $0.322$ & $0.983$ & $0.157$ \\
$f_2$ & $f'_2$ & $47$ & $0.087$ & $0.026$ & $0.009$ & $0.061$ & $0.009$ \\
$f_3$ & $f'_3$ & $64$ & $0.229$ & $0.005$ & $0$ & $0$ & $0.027$ \\
$f_4$ & $f'_4$ & $84$ & $0.089$ & $0$ & $0$ & $0$ & $0$ \\
\hline
\end{tabular}
\end{table}

\section{Applications}

We apply the total independence test using the corrected statistics $Q^{ab}$ and $G^{ab}$ to two datasets involving two-dimensional tables.
For such tables, the hypotheses are written :
\begin{equation}
{\cal H}_0 :\quad p_{ij}= p_{i +} p_{+ j} \quad \text{ against } \quad {\cal H}_1 :\quad \exists \ (i_1,j_1) \in I \times J, \: p_{i_1 j_1} \neq p_{i_1 +} p_{+ j_1},
\end{equation}
where $p_{i +}$ and $p_{+ j}$ are the marginal distributions for the two characters featured in the table.
To ensure the condition \eqref{nonnul} we remove the empty lines $i$ of $\{1,\ldots,I\}$ such that $n_{i+}=0$, and the empty columns $j$ of $\{1,\ldots,J\}$ such that $n_{+j}=0$.

\subsection{Multi-marker approach for Systemic Sclerosis}

Table \ref{diplo} is the diplotype table obtained from an association study in Humans looking for an association between three genetic markers on the gene TNFAIP3 and Systemic Sclerosis presented in \cite{r8}.
Empty columns have been removed.
A haplotype is the allelic distribution of markers on a chromosome, and a diplotype is the combination of both parental haplotypes.
Diplotype tables, though more interesting than haplotypic tables because they take into account more information, are usually trickier to handle because they are sparse. 
Our corrected statistics can therefore be helpful in such situations.

Though nine haplotypes theoretically exist, only eight are observed, denoted H1 to H8, leading to $8^2=64$ diplotypes Hi/Hj.
Two samples are compared, affected versus sound subjects, on which we test the independence between the diplotype configuration and the health status of $n=794$ individuals.
\begin{table}
\centering
\caption{Diplotype table for the association between TNFAIP3 and Systemic Sclerosis..\label{diplo}}
\begin{tabular}{ccccccc}
\hline
Status & H1/H1 & H1/H2 & H1/H3 & H1/H4 & H1/H5 & H1/H6 \\
\hline
Sound & $98$ & $7$ & $116$ & $2$ & $71$ & $3$   \\ 
Affected & $91$ & $9$ & $104$ & $3$ & $70$ & $12$ \\ 
\\
& H2/H3 & H2/H5 & H2/H6 & H3/H3 & H3/H4 & H3/H5 \\ 
\cline{2-7}
Sound & $4$ & $2$ & $0$ & $34$ & $1$ & $42$ \\
Affected & $5$ & $4$ & $1$ & $30$ & $2$ & $40$ \\
\\
& H3/H6 & H4/H5 & H5/H5 & H5/H6 & &\\ 
\cline{2-7}
Sound & $2$ & $1$ & $13$ & $1$ & &\\
Affected & $7$ & $1$ & $13$ & $5$ & & \\
\hline
\end{tabular}
\end{table}

The table is of dimension $2 \times 16$, that is $R=32$ categories with $c=1$ zero and $s=16$ parameters.
There are exactly $16$ expected frequencies below $5$.
The chi-square quantile $\chi^2_{0.95,15} = 24.99$ is compared to the statistics :
$$
Q=14.62, Q^{ab}=20.76, G=15.82, G^{ab}=28.43, RC^{2/3}=14.85.
$$
Only $G^{ab}$ leads to reject the null hypothesis of independence.
This seems to be the right decision since it is confirmed by the single-markers approaches and haplotype tests in \cite{r8}, all showing a significative association between the markers and the disease.

\subsection{Trophic level and vegetables in the rivers of the Petite Camargue Alsacienne}

The search for a link between the trophic level and the vegetable composition of some rivers of the Petite Camargue Alsacienne in North-East France leads to Table \ref{Troph}.
A river can be either oligotrophic, mesotrophic or eutrophic, if its nutritive content is respectively poor, intermediate or high.
Uncommon vegetables are considered rare, exotic or polluo-tolerant.
To each river, a triplet of binary characteristics $(r,p,e)$ is assigned, indicating the presence ($1$) or the absence ($0$) of rare ($r$), exotic ($e$) and polluo-tolerant~($p$) species.
The original ecological study can be found in \cite{r9}.
We consider $n=21$ different rivers.
Two empty columns were removed from the original table, leading to Table \ref{Troph} of dimension $3 \times 6$, that is $R=18$ categories and $c=7$ zeros, with $s=17$ parameters.
Here are $3$ expected frequencies below~$0.5$.

\begin{table}
\centering
\caption{Contingency table for the joint study of trophic level and vegetable composition in rivers.\label{Troph}}
\begin{tabular}{ccccccc}
\hline
& \multicolumn{6}{c}{$(r,p,e)$}\\
Trophic level & $(0,0,0)$ & $(1,0,0)$ & $(0,1,0)$ & $(0,0,1)$ & $(1,1,0)$ & $(0,1,1)$\\
\hline
Oligotrophic & $0$ & $0$ & $3$ & $0$ & $3$ & $2$  \\
Mesotrophic & $2$ & $1$ & $0$ & $2$ & $1$ & $0$  \\ 
Eutrophic & $2$ & $0$ & $3$ & $1$ & $1$ & $0$  \\ 
\hline
\end{tabular}
\end{table}

Test statistics are compared to the chi-square quantile $\chi^2_{0.95,10} = 18.31$:
$$
Q=14.38, Q^{ab}=20.68, G=18.67, G^{ab}=26.05, RC^{2/3}=14.84.
$$
Both corrected statistics $Q^{ab}$ and $G^{ab}$ as well as $G$ lead to reject the null hypothesis, indicating an association between trophic level and vegetable composition.
A thorough study of the table shows that rare species tend to settle preferentially in oligotrophic rivers.
They are indeed better adapted to this kind of environment which tends to disappear from the rivers in the study.
Moreover, polluo-tolerant species constitute the majority of the vegetables in eutrophic rivers.
A eutrophic environment is competitive and these resistant species tend to get the best of it.

\subsection{Discussion}
We suggest to compute both $Q^{ab}$ and $G^{ab}$, and to reject the null hypothesis if at least one of them is larger than the chi-square threshold.

Our results tend to prove that this approach is relevant and our corrections efficient in sparse tables.
They are all the more interesting for the fact that sparse tables are usually left aside because the hypotheses needed to apply classical chi-square tests are not satisfied.

\appendix

\section{Appendix section}\label{app}

\begin{proof}[Proof of Proposition \ref{condition}]
Assume that the first $m$ of the $c$ frequencies $n_i, 1 \leqslant i \leqslant c$ are modified.
Let $n'_j, j \in \{c+1,\ldots,R\}$ compensate for these changes.
The likelihood inequality \eqref{likely} is then equivalent to :
\begin{equation} \label{lab2}
\frac{p_{c+1}^{(n_{c+1}-n'_{c+1})}}{(n_{c+1}-n'_{c+1}+1)!} \times \cdots \times \frac{p_{R}^{(n_{R}-n'_{R})}}{(n_{R}-n'_{R}+1)!} \geqslant \frac{p_1^{n'_1}}{n'_1!} \dots \frac{p_m^{n'_m}}{n'_m!}.
\end{equation}
Let us show that \eqref{cond2} implies \eqref{lab2}.
Applying \eqref{cond2} to each element of the left hand side of \eqref{lab2} with multiplicities such that :
$\displaystyle \sum_{j=c+1}^{R} (n_j-n'_j) = \sum_{i=1}^{m} n'_i,$
we get : 
\begin{equation}
\frac{p_{c+1}^{(n_{c+1}-n'_{c+1})}}{n^{(n_{c+1}-n'_{c+1}+1)}} \times \cdots \times \frac{p_{R}^{(n_{R}-n'_{R})}}{n^{(n_{R}-n'_{R}+1)}} \geqslant
p_1^{n'_1} \dots p_m^{n'_m}.
\end{equation}
As $(n_j-n'_j+1)! \leqslant n^{(n_j-n'_j)}$ for all $j$ in $\{c+1,\ldots,R\}$ and
$n'_i! \geqslant 1$ for all $i$ in $\{1,\ldots,m\},$
we deduce \eqref{lab2} and equivalently \eqref{likely}.
\end{proof}

\begin{proof}[Proof of Lemma \ref{lem2}]
Let us first show that:
\begin{equation}
\forall \epsilon >0, \lim_{n \to +\infty} \PP (C_n>\epsilon) =0.
\end{equation}
For each $n$ we compute $\PP (C_n=c)$ for $c$ in $\{0,\ldots,R-1\}.$
Let $p^0$ be the probability under the null hypothesis $p^0=(p^0_1,\ldots,p^0_c,\ p^0_{c+1},\ldots,p^0_R)$.
A subject belongs to one of the first $c$ cells with probability $q^0=p^0_1 + \dots + p^0_c$ and to one of the $R-c$ last cells with probability $1-q^0=p^0_{c+1} + \dots + p^0_R$, with $q^0$ in $]0,1[$. 

Let us consider now the binomial distribution $\mathcal B(n;q^0)$ and write the probability $\PP (C_n=c)$ of obtaining a table containing exactly $c$ zeros placed anywhere :
\begin{eqnarray}
\quad \quad \PP (C_n=c) &=& \binom{R}{c} \PP (N_1=\ldots=N_c=0,\ N_{c+1} \neq 0,\ldots,N_R \neq 0),\\
&=&\binom{R}{c} (1-q^0)^n.
\end{eqnarray}
Then :
\begin{equation}
\PP (C_n=0)= 1- \sum_{c=1}^{R-1} \PP(C_n=c)=1-\PP(C_n>\epsilon), \quad \forall \epsilon \in ]0,1[.
\end{equation}
As $\binom{R}{c}$ is bounded and $(1-q^0)^n$ tends to $0$, the probability $\PP(C_n=c)$ also converges to $0$ for all $1 \leqslant c \leqslant R-1$ when $n$ tends to $+\infty$, and so does the corresponding sum over $c$.

We now use Borel-Cantelli's Lemma to conclude that $C_n$ converges to $0$ almost surely.
Indeed, for $\epsilon >0$:
\begin{equation}
\sum_{n \geqslant 1} \PP(C_n > \epsilon) \leqslant \sum_{n \geqslant 1} \PP(C_n \geqslant 1)=\sum_{c=1}^{R-1} \binom{R}{c} \frac{1}{q^0} < + \infty.
\end{equation}
\end{proof}

\section*{Acknowledgements}
I want to thank I. Combroux and M. Guedj for letting me use their data sets as illustrations for this work.
I am also truly grateful to P. Nobelis for his help and his advice.


\begin{thebibliography}{9}

\bibitem{r6}
\textsc{Agresti, A.}  (1990). \textit{Categorical data analysis}.
John Wiley \& Sons Inc., New York.

\bibitem{r11}
\textsc{Birch, M. W.} (1964).
A new proof of the {P}earson-{F}isher theorem.
\textit{Ann. Math. Statist.}
\textbf{35} 817--824.

\bibitem{r10}
\textsc{Bishop, Y. M. M.} and \textsc{Fienberg, S. E.} and \textsc{Holland P. W.} (1975).
\textit{Discrete multivariate analysis: theory and practice}.
The MIT Press, Cambridge, Mass.-London.

\bibitem{r4}
\textsc{Conover, W. J.}  (1999). \textit{Practical nonparametric statistics}.
John Wiley \& Sons Inc., New York.

\bibitem{r3}
\textsc{Cressie, N.} and \textsc{Read, T. R. C.} (1984).
Multinomial goodness-of-fit tests.
\textit{J. Roy. Statist. Soc. Ser. B}
\textbf{46(3)} 440--464.

\bibitem{r5}
\textsc{Fisher, R. A.} (1922). 
On the interpretation of $\chi ^2$ from contingency tables, and the calculation of P.
\textit{J. Roy. Stat. Soc.}
\textbf{85(1)} 87--94.

\bibitem{r2}
\textsc{Good, I. J.}  (1950). \textit{Probability and the weighing of evidence}.
Charles Griffin \& Co. Ltd., London.

\bibitem{r8}
\textsc{Guedj, M. } \textit{et al.}
Association of {TNFAIP}$3$ rs$5029939$ variant with systemic sclerosis in {E}uropean {C}aucasian population.
\textit{Under review}


\bibitem{r7}
\textsc{Ku, H. H.} (1963). 
A note on contingency tables involving zero frequencies and the 2Î test.
\textit{Technometrics}
\textbf{5(3)} 398--400.

\bibitem{r1}
\textsc{Pearson, K.} (1900). 
On the criterion that a given system of deviations from the probable in the case of a correlated system of variables is such that it can be reasonably supposed to have arisen from random sampling.
\textit{Philosophical Magazine}
\textbf{50} 157--175.

\bibitem{r9}
\textsc{Tr\'emoli\`eres, M.} and \textsc{Combroux, I.} and \textsc{Hermann, A.} and \textsc{Nobelis, P.} (2007). 
Conservation status assessment of aquatic habitats within the {R}hine floodplain using an index based on macrophytes.
\textit{Ann. Limnol.-Int. J. Lim.}
\textbf{43(4)} 233--244.

\end{thebibliography}
\end{document}